\numberwithin{equation}{section}
\numberwithin{figure}{section}
\theoremstyle{plain}
\newtheorem{thm}{\protect\theoremname}
\numberwithin{thm}{section}
\providecommand{\theoremname}{Theorem}
\begin{document}
\title{Upgrading subordination properties in free probability}
\author{Serban T. Belinschi and Hari Bercovici}
\subjclass[2000]{Primary: 46L35. }
\begin{abstract}
The existence of Voiculescu's subordination functions in the context of non-tracial operator-valued $C^*$-probability spaces has been established using analytic function theory methods.
We use a matrix construction to show that the subordination functions thus obtained also satisfy an appropriately modified form of subordination for conditional expectations.
\end{abstract}

\maketitle

\section{Introduction\label{sec:Introduction}}

Suppose that $(\mathcal{A},\tau)$ is a tracial $W^{*}$-probability
space. That is, $\mathcal{A}$ is a von Neumann algebra, and $\tau:\mathcal{A}\to\mathbb{C}$
is a faithful normal trace mapping the unit of $\mathcal A$ to the complex number one. 
Let $x_{1},x_{2}\in\mathcal{A}$ be two
freely independent, selfadjoint elements. Voiculescu \cite{entropy-1}
showed that, at least generically, there exists an analytic selfmap
$\omega$ of the complex upper half-plane $\mathbb{H}$ satisfying
the identity
\begin{equation}
\tau\left((\lambda-(x_{1}+x_{2}))^{-1}\right)=\tau\left((\omega(\lambda)-x_{1})^{-1}\right),\quad\lambda\in\mathbb{H}.\label{eq:v-subordination}
\end{equation}
The genericity hypothesis was subsequently removed by Biane \cite{biane},
who showed that $\mathcal{\omega}$ satisfies a stronger condition
than (\ref{eq:v-subordination}). In order to state this condition,
we denote by $\mathcal{Y}$ the von Neumann algebra generated by $x_1$,
that is, $\mathcal{Y}=\{x_1\}''$, and we let $\mathbb{E}_{\mathcal{Y}}:\mathcal{A}\to\mathcal{Y}$
be the trace-preserving conditional expectation. Then
\begin{equation}
\mathbb{E}_{\mathcal{Y}}[(\lambda-(x_{1}+x_{2}))^{-1}]=(\omega(\lambda)-x_{1})^{-1},\quad\lambda\in\mathbb{H},\label{eq:Biane subordination}
\end{equation}
and this, of course, implies (\ref{eq:v-subordination}) upon applying
$\tau$. Since $\mathbb{E}_{\mathcal{Y}}$ is just the orthogonal
projection in the scalar product induced by $\tau$, (\ref{eq:Biane subordination})
is equivalent to the following relation that involves just $\tau$:
\[
\tau\left(y(\lambda-(x_{1}+x_{2}))^{-1}\right)=\tau\left(y(\omega(\lambda)-x_{1})^{-1}\right),\quad\lambda\in\mathbb{H},y\in\mathcal{Y}.
\]
 Subsequently, Voiculescu \cite{coalgebra} discovered an underlying algebraic
structure that allows for a conceptually simple proof of Biane's extension
and, indeed, for a much more general result. Namely, suppose that
in addition to the probability space $(\mathcal{A},\tau)$, we consider
a von Neumann subalgebra $\mathcal{B}\subset\mathcal{A}$ containing
the unit of $\mathcal{A}$, and denote by $\mathbb{E}_{\mathcal{B}}:\mathcal{A}\to\mathcal{B}$
the trace-preserving conditional expectation. Let $x_{1},x_{2}\in\mathcal{A}$
be two selfadjoint elements of $\mathcal{A}$ that are free relative
to $\mathbb{E_{\mathcal{B}}}$ (or, more simply, $\mathbb{E}_{\mathcal{B}}$-free).
Denote by $\mathbb{H}(\mathcal{B})$ the collection of those $b\in\mathcal{B}$
that satisfy $\Im b:=\frac{b-b^*}{2i}\ge\varepsilon$ for some $\varepsilon>0$. Finally,
let $\mathcal{B}\langle x_{1}\rangle$ be the von Neumann algebra
generated by $\mathcal{B}$ and $x_{1}$, and let $\mathbb{E}_{\mathcal{B}\langle x_{1}\rangle}:\mathcal{A}\to\mathcal{B}\langle x_{1}\rangle$
denote the trace-preserving conditional expectation. Then there exists
an analytic function $\omega:\mathbb{H}(\mathcal{B})\to\mathbb{H}(\mathcal{B})$
such that
\begin{equation}
\mathbb{E}_{\mathcal{B}\langle x_{1}\rangle}\left[(b-(x_{1}+x_{2}))^{-1}\right]=(\omega(b)-x_{1})^{-1},\quad b\in\mathcal{B}.\label{eq:V subordination ope-valued}
\end{equation}
Like (\ref{eq:Biane subordination}), this identity can be rewritten as
\begin{equation}
\tau\left(y(b-(x_1+x_2))^{-1}\right)=\tau\left(y(\omega(b)-x_1)^{-1}\right),\quad b\in\mathcal{B},y\in\mathcal{B}\langle x_{1}\rangle.\label{eq:op-val subordination written with tau}
\end{equation}
 Using the fact that $\mathbb{E}_{\mathcal{B}}$ is trace-preserving,
(\ref{eq:op-val subordination written with tau}) is also seen to
be equivalent to
\begin{equation}
\mathbb{E}_{\mathcal{B}}\left[y(b-(x_{1}+x_{2}))^{-1}\right]=\mathbb{E}_{\mathcal{B}}\left[y(\omega(b)-x_{1})^{-1}\right],\quad b\in\mathcal{B},y\in\mathcal{B}\langle x_{1}\rangle.\label{eq:equation to generalize}
\end{equation}
As seen in \cite{be-ma-spe}, the existence of subordination functions can
be proved using methods of Banach space analyticity. In fact \cite{be-ma-spe}
provides the following analog of Voiculescu's original result from
\cite{entropy-1} that does not require the presence of a trace and it applies
to operator-valued $C^{*}$-probability spaces. The statement uses
the notation 
\[
F_{x}(b)=\left(\mathbb{E}\left[(b-x)^{-1}\right]\right)^{-1}.
\]

\begin{thm}
\label{thm:BMS result} Let $\mathcal{A}$ be a unital $C^{*}$-algebra,
let $\mathcal{B}\subset\mathcal{A}$ be a unital sub-$C^{*}$-algebra
containing the unit of $\mathcal{A}$, and let $\mathbb{E}:\mathcal{A}\to\mathcal{B}$
be a faithful, completely positive conditional expectation. Given
selfadjoint elements $x_{1},x_{2}\in\mathcal{A}$ that are $\mathbb{E}$-free,
there exist unique analytic functions $\omega_{1},\omega_{2}:\mathbb{H}(\mathcal{B})\to\mathbb{H}(\mathcal{B})$
such that
\[
F_{x_{1}+x_{2}}(b)=F_{x_{1}}(\omega_{1}(b))=F_{x_{2}}(\omega_{2}(b))=\omega_{1}(b)+\omega_{2}(b)-b
\]
for every $b\in\mathbb{H}(\mathcal{B})$.
\end{thm}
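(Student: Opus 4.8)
The plan is to reduce the three simultaneous identities to a single fixed-point equation for one function, to solve that equation by a holomorphic fixed-point theorem, and to identify the common value with $F_{x_1+x_2}$ only at the very end, where $\mathbb{E}$-freeness enters.

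First I would record the basic function theory of the transforms. Write $G_x(b)=\mathbb{E}[(b-x)^{-1}]$ and $h_x(b)=F_x(b)-b$ for $b\in\mathbb{H}(\mathcal{B})$. Since $\Im(b-x)=\Im b\ge\varepsilon>0$, one has $\Im((b-x)^{-1})=-(b-x)^{-1}(\Im b)(b^{*}-x)^{-1}\le 0$, so $\Im G_x(b)\le 0$, and $G_x(b)$ is invertible with $F_x=G_x^{-1}$ analytic. The fundamental estimate $\Im F_x(b)\ge\Im b$ I would obtain from the Kadison--Schwarz inequality for the completely positive map $\mathbb{E}$: with $a=(\Im b)^{1/2}(b^{*}-x)^{-1}$ and using that $\mathbb{E}$ is a $\mathcal{B}$-bimodule map, $-\Im G_x(b)=\mathbb{E}[a^{*}a]\ge\mathbb{E}[a^{*}]\mathbb{E}[a]=G_x(b)(\Im b)G_x(b)^{*}$, which on inverting gives $\Im F_x(b)=-G_x^{-1}(\Im G_x)G_x^{*-1}\ge\Im b$. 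Hence $h_x$ maps $\mathbb{H}(\mathcal{B})$ into the closed half-plane $\{\Im\ge 0\}$, and, because $x$ is a bounded self-adjoint element, its distribution is compactly supported, so $h_x$ is \emph{bounded}, say $\|h_x(z)\|\le M$, on each region $\{\Im z\ge\varepsilon\}$.

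Next I would reduce the system. If $\omega_1,\omega_2$ satisfy it, then $F_{x_1}(\omega_1)=\omega_1+\omega_2-b$ forces $\omega_2=b+h_{x_1}(\omega_1)$, and symmetrically $\omega_1=b+h_{x_2}(\omega_2)$; eliminating $\omega_2$ shows $\omega_1$ is a fixed point of
\[
f_b(w)=b+h_{x_2}\big(b+h_{x_1}(w)\big),
\]
and this chain of implications is reversible. So it suffices to produce, for each $b$, a unique $\omega_1(b)\in\mathbb{H}(\mathcal{B})$ with $f_b(\omega_1(b))=\omega_1(b)$, depending analytically on $b$. Here is the main analytic step and the crux of the argument. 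By the imaginary-part estimate, $\Im(b+h_{x_1}(w))\ge\Im b\ge\varepsilon$, so $h_{x_2}$ is applied on $\{\Im\ge\varepsilon\}$, whence $\|f_b(w)-b\|\le M$ and $\Im f_b(w)\ge\Im b$ for every $w$. Thus $f_b$ maps all of $\mathbb{H}(\mathcal{B})$ into the bounded set $\{z:\|z-b\|\le M,\ \Im z\ge\Im b\}$, which lies at positive distance from the boundary of $\mathbb{H}(\mathcal{B})$. Choosing a bounded convex subdomain $D$ with $\overline{f_b(\mathbb{H}(\mathcal{B}))}\subset D\subset\mathbb{H}(\mathcal{B})$, the restriction $f_b\colon D\to D$ has image strictly inside $D$, so the Earle--Hamilton theorem yields a unique fixed point $\omega_1(b)$ together with geometric convergence of the iterates $f_b^{\circ n}(w_0)\to\omega_1(b)$. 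The hard part is exactly this: $\mathbb{H}(\mathcal{B})$ is unbounded, and the decisive point is that the boundedness of $h_{x_2}$ confines the image to a bounded region, which is what makes a holomorphic fixed-point theorem applicable. Since the iterates are analytic in $b$ and converge locally uniformly, $b\mapsto\omega_1(b)$ is analytic (Weierstrass/Vitali in the Banach setting); setting $\omega_2(b)=b+h_{x_1}(\omega_1(b))$ gives the second analytic function, and the identities $F_{x_1}(\omega_1)=F_{x_2}(\omega_2)=\omega_1+\omega_2-b$ follow by unwinding the definition of $h$.

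There remains the one place where $\mathbb{E}$-freeness is indispensable: showing that the common value $\omega_1(b)+\omega_2(b)-b$ equals $F_{x_1+x_2}(b)$, since nothing so far has used the joint distribution of $x_1,x_2$. To close this gap I would invoke the operator-valued $R$-transform, for which freeness gives additivity $R_{x_1+x_2}=R_{x_1}+R_{x_2}$; translating this additivity through the relation between $F$ and $R$ shows precisely that a pair solving the above system must satisfy $F_{x_1+x_2}(b)=F_{x_1}(\omega_1(b))$. Equivalently, one can match the two sides as analytic $\mathcal{B}$-valued functions near ``$\Im b=\infty$'' by comparing their operator-valued moment expansions, the free moment-cumulant formula guaranteeing that the expansion generated by the fixed-point equation reproduces the moments of $x_1+x_2$; since $\mathbb{H}(\mathcal{B})$ is connected, analytic continuation then propagates the identity to all of $\mathbb{H}(\mathcal{B})$. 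Uniqueness of the pair $\omega_1,\omega_2$ is inherited from the uniqueness of the fixed point of $f_b$.
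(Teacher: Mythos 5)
Your proposal cannot be checked against a proof inside the paper, because the paper does not prove Theorem \ref{thm:BMS result}: the statement is quoted from \cite{be-ma-spe} and used as a black box (the paper's own contributions are Theorems \ref{thm:main Banach algebra theorem} and \ref{thm:E(Rc)}). Measured against the proof in \cite{be-ma-spe}, your route is essentially the same one: the Kadison--Schwarz estimate $\Im F_{x}(b)\ge\Im b$, the reduction of the system to the fixed-point equation $w=b+h_{x_{2}}(b+h_{x_{1}}(w))$, the Earle--Hamilton theorem on a bounded subdomain strictly invariant under $f_{b}$, analyticity of $b\mapsto\omega_{1}(b)$ from locally uniform convergence of the iterates, and identification of the common value with $F_{x_{1}+x_{2}}(b)$ by matching against Voiculescu's $R$-transform construction near infinity and then invoking the identity theorem on the connected domain $\mathbb{H}(\mathcal{B})$; this last step is exactly the relation between Section \ref{sec:Subrdination-in-Banach} (formula \eqref{eq:formula for omegas} and equation \eqref{eq:subordination eqn}) and Section \ref{sec: C* case} of the present paper.

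There is, however, one genuine gap, located at the very step you yourself call decisive. You justify the uniform bound $\|h_{x_{2}}(z)\|\le M$ on $\{z:\Im z\ge\varepsilon\}$ by saying that a bounded selfadjoint element has compactly supported distribution. That is scalar reasoning and does not transfer: in the scalar case one covers $\{\Im z\ge\varepsilon\}$ by $\{|z|\text{ large}\}$ (Neumann series) plus a compact remainder (continuity), but in an operator algebra the set $\{b:\Im b\ge\varepsilon,\ \|b^{-1}\|\ge\delta\}$ is unbounded --- take $b=\mathrm{diag}(i\varepsilon,iN)$ with $N$ arbitrarily large when $\mathcal{B}$ contains a projection --- so no compactness argument is available, and without this bound the image of $f_{b}$ is not known to be bounded and Earle--Hamilton cannot be applied at all. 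The estimate is true, but it requires a genuinely operator-valued argument: either cite the corresponding lemma of \cite{be-ma-spe}, or prove it via the Schur-complement (one-step Nevanlinna) representation --- letting $x$ act on the Hilbert $\mathcal{B}$-module $L^{2}(\mathcal{A},\mathbb{E})$ and decomposing against $\mathcal{B}\oplus\mathcal{B}^{\perp}$ gives $F_{x}(b)=b-\mathbb{E}[x]-x_{12}(b-x_{22})^{-1}x_{21}$ with $\|x_{12}\|,\|x_{21}\|\le\|x\|$ and $x_{22}$ selfadjoint, whence $\|h_{x}(b)\|\le\|x\|+\|x\|^{2}/\varepsilon$ on $\{\Im b\ge\varepsilon\}$. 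A second, smaller looseness is in the identification step: $R$-transform additivity produces a pair $(\tilde\omega_{1},\tilde\omega_{2})$ only for $\|b^{-1}\|$ small, and before fixed-point uniqueness can force $\tilde\omega_{1}=\omega_{1}$ there you must check that $\tilde\omega_{1}(b)$ actually lies in $\mathbb{H}(\mathcal{B})$, which holds after further restricting to $\Im b$ large (since $\tilde\omega_{1}(b)=b+O(1)$ there); only then does analytic continuation finish the proof. Both points are fixable, but as written the argument is incomplete precisely where it is hardest.
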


The purpose of this note is to extend the subordination result of
\cite{coalgebra} to this general context. Of course, there may not exist
a conditional expectation onto $\mathcal{B}\langle x_{1}\rangle$,
so (\ref{eq:V subordination ope-valued}) may not make sense, but
(\ref{eq:equation to generalize}) does make sense, and it is this
equation that we extend to the general context in Theorem \ref{thm:E(Rc)}.
Note that this result is somewhat stronger than that of \cite{coalgebra}
in the tracial case because we allow for the conditional expectation
onto an algebra that is possibly larger than $\mathcal{B}\langle x_{1}\rangle$.
This improvement can however be obtained already with the methods
of \cite{coalgebra}. We actually prove an even more general result (Theorem
\ref{thm:main Banach algebra theorem}) that applies in the original
context of operator-valued free probability \cite{operator-val}.
In this result, $\mathcal{A}$ is just a Banach algebra and $\mathbb{E}:\mathcal{A}\to\mathcal{B}$
is a continuous conditional expectation. In this general context,
the subordination functions $\omega_{j}(b)$ are only defined for
invertible elements $b\in\mathcal{B}$ for which $\|b^{-1}\|$ is
sufficiently small. Theorem \ref{thm:E(Rc)} follows from the Banach
algebra result by analytic continuation.

The idea for this work arose from the observation that, in the context of the study of bi-free additive convolution \cite{BBGS}, a method based on analytic functions and expansion to operator matrices allows for the recovery of Biane's result. This led to the extension of the operator-valued subordination result of \cite{coalgebra} to the more general context described in Theorem \ref{thm:E(Rc)}, again using purely analytic elementary methods.

\section{Subordination in Banach algebraic probability spaces\label{sec:Subrdination-in-Banach}}

Let $(\mathcal{A},\mathcal{B},\mathbb{E})$ be a Banach algebraic
probability space. That is, $\mathcal{A}$ is a complex, unital Banach
algebra, $\mathcal{B}$ is a closed subalgebra containing the unit
of $\mathcal{A}$, and $\mathbb{E}:\mathcal{A\to B}$ is a continuous
conditional expectation. The concept of $\mathcal{B}$-\emph{freeness
with respect to} $\mathbb{E}$, which we refer to more simply as $\mathbb{E}$\emph{-freeness},
was introduced in \cite{operator-val}. It was also shown in \cite{operator-val} that the
calculation of (the symmetric parts of) free convolutions of $\mathcal{B}$-valued
distributions follows the same pattern first discovered in \cite{boxplus}
when $\mathcal{B}=\mathbb{C}$. We recall briefly the relevant notation.
Given a random variable, that is, an element $x\in\mathcal{A}$, and
given $b\in\mathcal{B},$ we write
\begin{align*}
G_{x}(b) & =\mathbb{E}\left[(b-x)^{-1}\right],\\
F_{x}(b) & =G_{x}(b)^{-1},\\
\widetilde{G}_{x}(b) & =G_{x}(b^{-1})=b+\sum_{n=1}^{\infty}\mathbb{E}\left[b(xb)^{n}\right],
\end{align*}
whenever the quantities on the right-hand side make sense. In particular,
$\widetilde{G}_{x}$ is defined for $\|b\|<1/\|x\|$ and it is an
analytic function. Moreover, the derivative of $\widetilde{G}_{x}$
at $b=0$ is the identity map of $\mathcal{B}.$ It follows that $\widetilde{G}_{x}$
maps conformally a neighborhood of $b=0$ onto another such neighborhood,
and thus has an inverse function denoted $\widetilde{K}_{x}$. Observe
also that
\[
\widetilde{G}_{x}(b)=b\left(1+\sum_{n=1}^\infty\mathbb{E}\left[(xb)^{n}\right]\right)=b(1+O(\|b\|)),
\]
and thus, provided $\|b\|$ is sufficiently small, $b$ is invertible
if and only if $\widetilde{G}_{x}(b)$ is invertible. It follows that
both $\widetilde{G}_{x}$ and $\widetilde{K}_{x}$ map invertible
elements to invertible elements when restricted to a sufficiently
small neighborhood of $0\in\mathcal{B}$. Finally, define
\[
R_{x}(b)=\widetilde{K}_{x}(b)^{-1}-b^{-1}
\]
for $b\in\mathcal{B}$ such that $\|b\|$ is sufficiently small. (As
in \cite{operator-val}, we reserve the exponent $-1$ for inverses in the algebra 
$\mathcal{A}.$) The function $R_{x}$ continues analytically to a
neighborhood of $b=0$.

Suppose now that $x_{1},x_{2}\in\mathcal{A}$ are $\mathbb{E}$-free,
and set $x=x_{1}+x_{2}$. Then it is shown in  \cite{operator-val} (see the end
of Section 4) that
\[
R_{x}(b)=R_{x_{1}}(b)+R_{x_{2}}(b)
\]
for $b\in\mathcal{B}$ such that $\|b\|$ is sufficiently small. We
define now 
\begin{equation}
\omega_{j}(b)=(\widetilde{K}_{x_{j}}(\widetilde{G}_{x}(b^{-1})))^{-1},\quad j=1,2,\label{eq:formula for omegas}
\end{equation}
for invertible elements $b\in\mathcal{B}$ such that $\|b^{-1}\|$
is sufficiently small. These functions are analytic for such values
of $b$, and the equalities
\begin{equation}
F_{x}(b)=F_{x_{1}}(\omega_{1}(b))=F_{x_{2}}(\omega_{2}(b))=\omega_{1}(b)+\omega_{2}(b)-b\label{eq:subordination eqn}
\end{equation}
hold throughout their domain of definition (see \cite{ber-voi-otaa} for the scalar-valued case). The functions $\omega_{j}$
will be referred to as the \emph{subordination functions} associated 
to the variables $x_{1}$ and $x_{2}$. (These subordination functions
are most useful when they can be defined on a standard domain, as
it happens when we work with selfadjoint variables in a $C^{*}$-probability
space---see Section \ref{sec: C* case} below.)

We are now ready to prove the main result of this section. The special
case $y=1$ follows, of course, from (\ref{eq:subordination eqn}).
\begin{thm}
\label{thm:main Banach algebra theorem}Let $(\mathcal{A},\mathcal{B},\mathbb{E})$
be a Banach algebraic probability space, and let $x_{1},x_{2}$ and
$y$ be elements of $\mathcal{A}$ such that $x_{2}$ is $\mathbb{E}$-free
from $\{x_{1},y\}$. Denote by $\omega_{1}$ and $\omega_{2}$ the
subordination functions associated to $x_{1}$ and $x_{2}$, and set
$x=x_{1}+x_{2}$. Then we have 
\[
\mathbb{E}[y(b-x)^{-1}]=\mathbb{E}[y(\omega_{1}(b)-x_{1})^{-1}]\ \text{ and }\ \mathbb{E}[(b-x)^{-1}y]=\mathbb{E}[(\omega_{1}(b)-x_{1})^{-1}y]
\]
for every invertible $b\in\mathcal{B}$ such that $\|b^{-1}\|$ is
sufficiently small.
\end{thm}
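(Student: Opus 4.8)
The plan is to lift the problem to $2\times 2$ matrices and extract the desired identity from a single off-diagonal entry of the ordinary ($y=1$) subordination relation (\ref{eq:subordination eqn}) applied there. I would work in the Banach algebraic probability space $(M_{2}(\mathcal{A}),M_{2}(\mathcal{B}),\mathbb{E}_{2})$, where $\mathbb{E}_{2}=\mathbb{E}\otimes\mathrm{id}_{M_{2}}$ acts entrywise; since $\mathbb{E}$ is a continuous conditional expectation and $\mathcal{B}$-bimodular, $\mathbb{E}_{2}$ is again a continuous conditional expectation onto the closed subalgebra $M_{2}(\mathcal{B})$. For the second (right-multiplication) identity I set $X_{1}=\left(\begin{smallmatrix}x_{1} & y\\ 0 & 0\end{smallmatrix}\right)$ and $X_{2}=\left(\begin{smallmatrix}x_{2} & 0\\ 0 & 0\end{smallmatrix}\right)$, so that $X:=X_{1}+X_{2}=\left(\begin{smallmatrix}x & y\\ 0 & 0\end{smallmatrix}\right)$. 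Because $x_{2}$ is $\mathbb{E}$-free from $\{x_{1},y\}$, the subalgebras generated over $\mathcal{B}$ by $\{x_{1},y\}$ and by $x_{2}$ are $\mathbb{E}$-free, and since freeness is preserved under matrix amplification (cf. \cite{operator-val}), the matrices $X_{1}$ and $X_{2}$ are $\mathbb{E}_{2}$-free. Hence (\ref{eq:subordination eqn}) furnishes matricial subordination functions $\Omega_{1},\Omega_{2}$ with $F_{X}(B)=F_{X_{1}}(\Omega_{1}(B))=F_{X_{2}}(\Omega_{2}(B))=\Omega_{1}(B)+\Omega_{2}(B)-B$ for all $B\in M_{2}(\mathcal{B})$ with $\|B^{-1}\|$ sufficiently small.

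Next I would specialize to the diagonal argument $B=\left(\begin{smallmatrix}b & 0\\ 0 & b'\end{smallmatrix}\right)$ with $b,b'$ invertible and $\|b^{-1}\|,\|b'^{-1}\|$ small. All the matrices involved are upper triangular, so every resolvent, and hence every $G$, $F$, and (via (\ref{eq:formula for omegas})) every $\Omega_{j}$, stays upper triangular, and the $2\times2$ inversions are explicit. A direct computation gives $F_{X}(B)=\left(\begin{smallmatrix}F_{x}(b) & -F_{x}(b)\mathbb{E}[(b-x)^{-1}y]\\ 0 & b'\end{smallmatrix}\right)$, while writing $\Omega_{1}(B)=\left(\begin{smallmatrix}\alpha & \theta\\ 0 & \alpha'\end{smallmatrix}\right)$ one finds $F_{X_{1}}(\Omega_{1}(B))=\left(\begin{smallmatrix}F_{x_{1}}(\alpha) & \theta-F_{x_{1}}(\alpha)\mathbb{E}[(\alpha-x_{1})^{-1}y]\\ 0 & \alpha'\end{smallmatrix}\right)$. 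Comparing diagonal entries and using the scalar relation $F_{x}(b)=F_{x_{1}}(\omega_{1}(b))$ together with the uniqueness of subordination functions near $0$ identifies $\alpha=\omega_{1}(b)$ and $\alpha'=b'$; the analogous comparison for $\Omega_{2}$ gives its $(1,1)$-entry $\omega_{2}(b)$ and its $(2,2)$-entry $b'$.

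The main obstacle is to show that the off-diagonal entry $\theta$ of $\Omega_{1}(B)$ vanishes: the off-diagonal of $F_{X_{1}}(\Omega_{1}(B))=F_{X}(B)$ taken in isolation merely re-encodes the target identity and so determines $\theta$ tautologically. The extra leverage comes from the fact that $X_{2}$ contains no copy of $y$, so that $F_{X_{2}}$ leaves the off-diagonal unchanged, i.e. $F_{X_{2}}\!\left(\begin{smallmatrix}\gamma & \psi\\ 0 & \gamma'\end{smallmatrix}\right)=\left(\begin{smallmatrix}F_{x_{2}}(\gamma) & \psi\\ 0 & \gamma'\end{smallmatrix}\right)$. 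Matching off-diagonals in $F_{X_{2}}(\Omega_{2}(B))=F_{X}(B)$ therefore computes the $(1,2)$-entry of $\Omega_{2}(B)$ explicitly as $-F_{x}(b)\mathbb{E}[(b-x)^{-1}y]$, and feeding this into the off-diagonal of the additive relation $\Omega_{1}(B)+\Omega_{2}(B)-B=F_{X}(B)$ forces $\theta=0$. Once $\theta=0$ is known, the off-diagonal of $F_{X_{1}}(\Omega_{1}(B))=F_{X}(B)$ reads $-F_{x}(b)\mathbb{E}[(\omega_{1}(b)-x_{1})^{-1}y]=-F_{x}(b)\mathbb{E}[(b-x)^{-1}y]$, and cancelling the invertible factor $F_{x}(b)$ yields $\mathbb{E}[(b-x)^{-1}y]=\mathbb{E}[(\omega_{1}(b)-x_{1})^{-1}y]$. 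The first (left-multiplication) identity follows verbatim from the transposed construction $X_{1}=\left(\begin{smallmatrix}x_{1} & 0\\ y & 0\end{smallmatrix}\right)$, $X_{2}=\left(\begin{smallmatrix}x_{2} & 0\\ 0 & 0\end{smallmatrix}\right)$, reading the $(2,1)$-entries instead. The only nonroutine points are the amplification of $\mathbb{E}$-freeness to $M_{2}$ and, crucially, the simultaneous use of all three subordination relations to kill $\theta$; the remaining steps are the explicit triangular matrix computations indicated above.
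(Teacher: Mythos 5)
Your proposal is correct and takes essentially the same route as the paper: the identical $2\times 2$ amplification with $y$ placed in an off-diagonal corner, followed by the same mechanism of playing the $F_{X_2}$-subordination relation and the additive relation $\Omega_1(B)+\Omega_2(B)-B=F_X(B)$ against each other to force the off-diagonal entry of $\Omega_1(B)$ to vanish, after which the $F_{X_1}$-relation gives the identity (the paper does the left-hand identity with lower triangular matrices and remarks that the right-hand one is the transposed argument, exactly as you do). The only cosmetic differences are that the paper works inside the Banach algebra of triangular matrices, so triangularity of all quantities including $\Omega_j(B)$ is automatic rather than needing your preservation argument, and it identifies the $(1,1)$ entry of $\Omega_1(B)$ as $\omega_1(b)$ by direct computation from (\ref{eq:formula for omegas}) rather than via your appeal to uniqueness/injectivity of $F_{x_1}$ near infinity (which is justifiable, since $G_{x_1}$ is injective where $\widetilde{K}_{x_1}$ is defined, but is stated more loosely than the paper's computation).
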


\begin{proof}
Consider the probability space $(\mathcal{A}_{2},\mathcal{B}_{2},\mathbb{E}_{2})$,
where $\mathcal{A}_{2}$ is the Banach algebra consisting of $2\times2$
matrices of the form
\[
\left[\begin{array}{cc}
a_{1,1} & 0\\
a_{2,1} & a_{2,2}
\end{array}\right],\quad a_{1,1},a_{2,1},a_{2,2}\in\mathcal{A},
\]
and $\mathcal{B}_{2}$ is the subalgebra consising of the matrices
whose entries belong to $\mathcal{B}.$ The norm of such a matrix
can be taken to be the operator norm when the matrix is viewed as
a left multiplication operator on $\mathcal{A}\oplus\mathcal{A}$
endowed with the $\ell^{1}$ norm. The conditional expectation $\mathbb{E}_{2}$
is simply $\mathbb{E}$ applied entrywise. The elements
\[
X_{1}=\left[\begin{array}{cc}
x_{1} & 0\\
y & 0
\end{array}\right],X_{2}=\left[\begin{array}{cc}
x_{1} & 0\\
0 & 0
\end{array}\right]
\]
 are $\mathbb{E}_{2}$-free (this is a straightforward verification,
but can as well be deduced from \cite{NSS} or from \cite[Corollary 3.7]{operator-val}). 
We denote by $X$ their sum, and we denote by $\Omega_{j}(B)$, $j=1,2$,
the corresponding subordination functions, defined for invertible
matrices $B\in\mathcal{B}_{2}$ such that $\|B^{-1}\|<\varepsilon$
for some $\varepsilon>0$. 

We first show that the matrices $\Omega_{j}(B)$ have the form
\[
\Omega_{j}(B)=\left[\begin{array}{cc}
\omega_{j}(b_{1,1}) & 0\\
g_{j}(B) & b_{2,2}
\end{array}\right],\quad B=\left[\begin{array}{cc}
b_{1,1} & 0\\
b_{2,1} & b_{2,2}
\end{array}\right],
\]
for some analytic functions $g_{1},g_{2}$. To do this, we use (\ref{eq:formula for omegas}),
so we calculate
\[
(B-X_{1})^{-1}=\left[\begin{array}{cc}
(b_{1,1}-x_{1})^{-1} & 0\\
-b_{2,2}^{-1}(b_{2,1}-y)(b_{1,1}-x_{1})^{-1} & b_{2,2}^{-1}
\end{array}\right],
\]
\begin{equation}
G_{X_{1}}(B)=\left[\begin{array}{cc}
G_{x_{1}}(b_{1,1}) & 0\\
-b_{2,2}^{-1}\mathbb{E}[(b_{2,1}-y)(b_{1,1}-x_{1})^{-1}] & b_{2,2}^{-1}
\end{array}\right],\label{eq:don't repeat too much}
\end{equation}
and
\[
\widetilde{G}_{X_{1}}(B)=G_{X_1}(B^{-1})=\left[\begin{array}{cc}
\widetilde{G}_{x_{1}}(b_{1,1}
) & 0\\
\mathbb{E}[(b_{2,1}b_{1,1}^{-1}+b_{2,2}y)(b_{1,1}^{-1}-x_1)^{-1}] & b_{2,2} 
\end{array}\right].
\]
The $(1,1)$ entry of this matrix only depends on $b_{1,1}$, and
therefore
\[
\widetilde{K}_{X_{1}}(B)=\left[\begin{array}{cc}
\widetilde{K}_{x_{1}}(b_{1,1}^{-1}) & 0\\
h_{1}(B) & b_{2,2}
\end{array}\right],
\]
for some analytic function $h_{1}$. Replacing $x_{1}$ by $x=x_{1}+x_{2}$,
we similarly obtain
\[
\widetilde{G}_{X}(B)=\left[\begin{array}{cc}
\widetilde{G}_{x}(b_{1,1}^{-1}) & 0\\
h(B) & b_{2,2}
\end{array}\right],
\]
and therefore, according to \eqref{eq:formula for omegas},
\begin{align*}
\Omega_{1}(B)=\widetilde{K}_{X_{1}}(\widetilde{G}_{X}(B^{-1}))^{-1}&=\left[\begin{array}{cc} 
\widetilde{K}_{x_{1}}(\widetilde{G}_{x}(b_{1,1}^{-1})) & 0\\
h_{1}(\widetilde{G}_{X}(B^{-1})) & b_{2,2}^{-1}
\end{array}\right]^{-1}\\ 
&=\left[\begin{array}{cc} \omega_1(b_{1,1}) & 0 \\ 
-b_{2,2}h_{1}(\widetilde{G}_{X}(B^{-1}))\omega_1(b_{1,1}) & b_{2,2} \end{array}\right]. 
\end{align*}
The argument for $\Omega_{2}$ is obtained upon replacing $x_{1}$
by $x_{2}$ and $y$ by zero.

Fix $t\in\mathbb{R}_{+}$ and an invertible $b\in\mathcal{B}$ such
that $t>1/\varepsilon$ and $\|b^{-1}\|<\varepsilon$. We use the subordination equation (\ref{eq:subordination eqn}) 
with $X_{j}$ in place of $x_{j}$ and with $B$ in place of $b$,
where
\[
B=\left[\begin{array}{cc}
b & 0\\
0 & t
\end{array}\right].
\]
 As seen above, for this particular form of $B$ we have
\[
\Omega_{j}(B)=\left[\begin{array}{cc}
\omega_{j}(b) & 0\\
\beta_{j} & t
\end{array}\right],\quad j=1,2.
\]
We proceed to determine $\beta_{j}$. The calculations above, and
one more inversion, yield
\[
G_{X}(B)=\left[\begin{array}{cc}
G_{x}(b) & 0\\
t^{-1}\mathbb{E}[y(b-x)^{-1}] & t^{-1}
\end{array}\right],\quad F_{X}(B)=\left[\begin{array}{cc}
F_{x}(b) & 0\\
-\mathbb{E}[y(b-x)^{-1}]F_{x}(b) & t
\end{array}\right].
\]
Formula (\ref{eq:don't repeat too much}) gives
\[
G_{X_{1}}(\Omega_{1}(B))=\left[\begin{array}{cc}
G_{x_{1}}(\omega_{1}(b)) & 0\\
-t^{-1}\mathbb{E}[(\beta_{1}-y)(\omega_{1}(b)-x_{1})^{-1}] & t^{-1}
\end{array}\right],
\]
and thus
\[
F_{X_{1}}(\Omega_{1}(B))=\left[\begin{array}{cc}
F_{x_{1}}(\omega_{1}(b)) & 0\\
\mathbb{E}[(\beta_{1}-y)(\omega_{1}(b)-x_{1})^{-1}]F_{x_{1}}(\omega_{1}(b)) & t
\end{array}\right].
\]
Replacing $x_{1}$ by $x_{2}$ and $y$ by zero we obtain
\[
G_{X_{2}}(\Omega_{2}(B))=\left[\begin{array}{cc}
G_{x_{1}}(\omega_{1}(b)) & 0\\
-t^{-1}\mathbb{E}[\beta_{2}(\omega_{2}(b)-x_{2})^{-1}] & t^{-1}
\end{array}\right],
\]
and 
\[
F_{X_{2}}(\Omega_{2}(B))=\left[\begin{array}{cc}
F_{x_{2}}(\omega_{2}(b)) & 0\\
\mathbb{E}[\beta_{2}(\omega_{2}(b)-x_{2})^{-1}]F_{x_{2}}(\omega_{2}(b)) & t
\end{array}\right].
\]
The $(2,1)$ entry in the equality $F_{X}(B)=F_{X_{j}}(\Omega_{j}(B))$
amounts to
\begin{align}
-\mathbb{E}[y(b-x)^{-1}]F_{x}(b) & =\mathbb{E}[(\beta_{1}-y)(\omega_{1}(b)-x_{1})^{-1}]F_{x_{1}}(\omega_{1}(b))\label{eq:finally}\\
 & =\mathbb{E}[\beta_{2}(\omega_{2}(b)-x_{2})^{-1}]F_{x_{2}}(\omega_{2}(b)),\nonumber 
\end{align}
and the last term is
\[
\mathbb{E}[\beta_{2}(\omega_{2}(b)-x_{2})^{-1}]F_{x_{2}}(\omega_{2}(b))=\beta_{2}G_{x_{2}}(\omega_{2}(b))F_{x_{2}}(\omega_{2}(b))=\beta_{2}.
\]
Thus $\beta_{2}=-\mathbb{E}[y(b-x)^{-1}]F_{x}(b)$. On the other hand,
the $(2,1)$ entry in the equality $\Omega_{1}(b)+\Omega_{2}(b)-b=F_{X}(b)$
yields
\[
\beta_{1}+\beta_{2}=-\mathbb{E}[y(b-x)^{-1}]F_{x}(b),
\]
showing that $\beta_{1}=0$. The equality $\mathbb{E}[y(b-x)^{-1}]=\mathbb{E}[y(\omega_{1}(b)-x_{1})^{-1}]$
follows now from (\ref{eq:finally}). The proof of the second identity
in the statement is similar, using upper triangular $2\times2$ matrices.
\end{proof}

\section{\label{sec: C* case}Subordination in $C^{*}$-probability spaces}

Let $(\mathcal{A},\mathcal{B},\mathbb{E})$ be an operator valued 
$C^{*}$-probability space. Thus, $\mathcal{A}$ is a unital $C^{*}$-algebra,
$\mathcal{B}\subset\mathcal{A}$ is a $C^{*}$-subalgebra containing
the unit of $\mathcal{A}$, and $\mathbb{E}:\mathcal{A}\to\mathcal{B}$
is a completely positive conditional expectation. We denote by $\mathbb{H}(\mathcal{B})\subset\mathcal{B}$
the set consisting of those elements $b\in\mathcal{B}$ whose imaginary
part $\Im b=(b-b^{*})/2i$ is nonnegative and invertible. 

Suppose now that $x_{1},x_{2}\in\mathcal{A}$ are two selfadjoint
elements that are $\mathbb{E}$-free, and set $x=x_{1}+x_{2}$.
It was shown in \cite{be-ma-spe} that there exist unique analytic functions
$\omega_{1},\omega_{2}:\mathbb{H}(\mathcal{B})\to\mathbb{H}(\mathcal{B})$
such that
\[
F_{x}(b)=F_{x_{1}}(\omega_{1}(b))=F_{x_{2}}(\omega_{2}(b))=\omega_{1}(b)+\omega_{2}(b)-b,\quad b\in\mathbb{H}(\mathcal{B}).
\]
These functions are, of course, analytic continuations of the subordination
functions considered in Section \ref{sec:Subrdination-in-Banach}.
The following statement follows from Theorem \ref{thm:main Banach algebra theorem}
by analytic continuation.
\begin{thm}
\label{thm:E(Rc)}Let $(\mathcal{A},\mathcal{B},\mathbb{E})$ be an
operator valued $C^{*}$-probability space. Consider elements $x_{1},x_{2},y\in\mathcal{A}$
such that $x_{1}$ and $x_{2}$ are selfadjoint, and $x_{2}$ is $\mathbb{E}$-free
from $\{x_{1},y\}$. Then the subordination function $\omega_{1}$
satisfies
\[
\mathbb{E}[y(b-x^{-1})]=\mathbb{E}[y(\omega_{1}(b)-x_{1})^{-1}]\ \text{ and }\ \mathbb{E}[(b-x)^{-1}y]=\mathbb{E}[(\omega_{1}(b)-x_{1})^{-1}y],
\]
for every $b\in\mathbb{H}^{+}(\mathcal{B}).$ 
\end{thm}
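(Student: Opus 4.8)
The plan is to derive Theorem~\ref{thm:E(Rc)} from the already-proved Banach algebra result (Theorem~\ref{thm:main Banach algebra theorem}) by analytic continuation, exactly as the paper announces. The two theorems assert the same pair of identities, namely
\[
\mathbb{E}[y(b-x)^{-1}]=\mathbb{E}[y(\omega_{1}(b)-x_{1})^{-1}],\qquad \mathbb{E}[(b-x)^{-1}y]=\mathbb{E}[(\omega_{1}(b)-x_{1})^{-1}y],
\]
but over different domains: Theorem~\ref{thm:main Banach algebra theorem} gives them for invertible $b\in\mathcal{B}$ with $\|b^{-1}\|$ sufficiently small, whereas Theorem~\ref{thm:E(Rc)} asks for them on all of $\mathbb{H}(\mathcal{B})$ (written $\mathbb{H}^{+}(\mathcal{B})$ in the statement). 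The entire task is therefore to propagate an identity that is already known on a small neighborhood of $0$ out to the standard domain $\mathbb{H}(\mathcal{B})$.

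First I would record that in the $C^{*}$ setting the subordination function $\omega_{1}$ extends analytically from the small neighborhood of Section~\ref{sec:Subrdination-in-Banach} to all of $\mathbb{H}(\mathcal{B})$, mapping $\mathbb{H}(\mathcal{B})$ into itself; this is precisely the content of the \cite{be-ma-spe} existence result recalled just before the statement, and the remark there that these are analytic continuations of the Banach-algebraic $\omega_{j}$. Next I would check that both sides of each identity are well-defined $\mathcal{B}$-valued analytic functions of $b$ on $\mathbb{H}(\mathcal{B})$. For $b\in\mathbb{H}(\mathcal{B})$ the resolvents $(b-x)^{-1}$ and $(\omega_{1}(b)-x_{1})^{-1}$ exist because $x,x_{1}$ are selfadjoint and $\Im b$, $\Im\omega_{1}(b)$ are nonnegative and invertible, so each side is a composition of the analytic maps $b\mapsto(b-x)^{-1}$, $b\mapsto\omega_{1}(b)$, left/right multiplication by the fixed element $y$, and the bounded linear map $\mathbb{E}$. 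Thus all four functions appearing in the two identities are analytic $\mathbb{B}$-valued maps on $\mathbb{H}(\mathcal{B})$.

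The core step is then a standard connectedness-plus-identity-theorem argument. The domain $\mathbb{H}(\mathcal{B})$ is an open, connected (indeed convex) subset of the Banach space $\mathcal{B}$. The first identity, say, is equivalent to the vanishing on $\mathbb{H}(\mathcal{B})$ of the analytic function
\[
\Phi(b)=\mathbb{E}[y(b-x)^{-1}]-\mathbb{E}[y(\omega_{1}(b)-x_{1})^{-1}].
\]
By Theorem~\ref{thm:main Banach algebra theorem}, $\Phi$ vanishes on a nonempty open subset of $\mathbb{H}(\mathcal{B})$ — namely on the invertible $b$ with $\|b^{-1}\|$ small, which one checks intersects $\mathbb{H}(\mathcal{B})$; concretely $b=i s\cdot 1$ for large real $s$ lies in $\mathbb{H}(\mathcal{B})$ and has $\|b^{-1}\|=1/s$ arbitrarily small. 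Since a $\mathcal{B}$-valued analytic function on a connected open set that vanishes on a nonempty open subset vanishes identically, $\Phi\equiv 0$ on $\mathbb{H}(\mathcal{B})$. The same argument applied to the corresponding function for the right-multiplication identity finishes the proof.

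I expect the only genuine point needing care to be the overlap of the two domains: one must verify that the region where Theorem~\ref{thm:main Banach algebra theorem} applies (invertible $b$ with $\|b^{-1}\|<\varepsilon$) actually meets $\mathbb{H}(\mathcal{B})$ in an open set, so that the identity theorem has a nonempty open set to start from, and that the $C^{*}$-algebraic $\omega_{1}$ really does restrict to the Banach-algebraic one there (so the two statements are about the same function). Both are handled by the purely imaginary scalar multiples $b=is\cdot 1$ and by the uniqueness of analytic continuation, so there is no serious analytic obstacle; the substance of the result is entirely in Theorem~\ref{thm:main Banach algebra theorem}, and Theorem~\ref{thm:E(Rc)} is its soft consequence.
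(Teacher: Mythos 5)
Your proposal is correct and follows exactly the route the paper takes: Theorem~\ref{thm:E(Rc)} is deduced from Theorem~\ref{thm:main Banach algebra theorem} by analytic continuation, using that both sides are analytic on the open connected set $\mathbb{H}(\mathcal{B})$, that they agree on the nonempty open overlap (e.g.\ near $b=is\cdot 1$ for large $s>0$), and that the functions $\omega_j$ of \cite{be-ma-spe} continue the Banach-algebraic ones. The paper gives no more detail than this one-line argument, so your write-up, including the identity-theorem and domain-overlap checks, is a faithful (indeed more complete) rendering of its proof.
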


The result applies, for instance, to elements $y$ in the closure
of the algebra $\mathcal{B}\langle x_{1}\rangle$ generated by $\mathcal{B}$
and $x_{1}$. To see how this result relates to \cite{coalgebra}, consider
an arbitrary state $\varphi$ on $\mathcal{B},$ and construct its
extension $\psi=\varphi\circ\mathbb{E}.$ Then Theorem \ref{thm:E(Rc)}
simply states that $(b-x)^{-1}-(\omega_{1}(b)-x_{1})^{-1}$ is orthogonal
to $y^{*}$ in the scalar product induced by $\psi$. In other words,
if $\mathcal{Y}$ is any $C^{*}$-algebra algebra containing $\mathcal{B}\langle x_{1}\rangle$,
and if $\mathcal{Y}$ is $\mathbb{E}$-free from $x_{2}$, then the
orthogonal projection of $(b-x)^{-1}$ onto the closure of $\mathcal{Y}$
(in the scalar product induced by $\psi)$ is precisely $(\omega_{1}(b)-x_{1})^{-1}$.
In particular, this orthogonal projection belongs to the closure of
$\mathcal{B}\langle x_{1}\rangle$.

\end{document}